\theoremstyle{definition}
\newtheorem{defn}{Definition}[section]
\newtheorem{ex}[defn]{Example}
\newtheorem{rem}[defn]{Remark}
\theoremstyle{plain}
\newtheorem{lemma}[defn]{Lemma}
\newtheorem{thm}[defn]{Theorem}
\newcommand{\B}{{\mathcal B}}
\newcommand{\E}{{\mathcal E}}
\newcommand{\F}{{\mathbb F}}
\DeclareMathOperator\SRG{SRG}
\DeclareMathOperator\RG{RG}
\DeclareMathOperator{\PG}{PG}
\newcommand{\gs}[3]{\genfrac{[}{]}{0pt}{0}{#1}{#2}_{#3}}
\newcommand{\gstxt}[3]{\genfrac{[}{]}{0pt}{1}{#1}{#2}_{#3}}
\newcommand{\comments}[1]{}
\begin{document}
\newcounter{savecntr}
\newcounter{restorecntr}

\title{$q$-Analogs of strongly regular graphs}
\author{Michael Braun (\url{michael.braun@h-da.de})\footnote{Faculty of Computer Science, University of Applied Sciences, Darmstadt, Germany}\\
Dean Crnkovi\'c \footnote{Corresponding author} (\url{deanc@math.uniri.hr})\setcounter{savecntr}{\value{footnote}}\footnote{Faculty of Mathematics, University of Rijeka}\\
Maarten De Boeck (\url{mdeboeck@memphis.edu})\footnote{Department of Mathematical Sciences, University of Memphis}\\
Vedrana Mikuli\'c Crnkovi\'c (\url{vmikulic@math.uniri.hr})\setcounter{restorecntr}{\value{footnote}}\setcounter{footnote}{\value{savecntr}}\footnotemark
\\
and\\
Andrea \v{S}vob (\url{asvob@math.uniri.hr})\setcounter{footnote}{\value{savecntr}}\footnotemark\\
}

\maketitle

\begin{abstract}
We introduce the notion of $q$-analogs of strongly regular graphs and give several examples of such structures. We prove a necessary condition on the parameters, show the connection to designs over finite fields, and present a classification.
\end{abstract}

\vspace*{0.5cm}

\noindent {\bf Keywords:} $q$-analog, strongly regular graph, subspace design.

\noindent {\bf AMS classification numbers:} 05E30, 05B05.

\section{Introduction}\label{sec:intro}

In graph theory, Bose \cite{Bos63} introduced the notion of a strongly regular graph with parameters $(v,k,\lambda,\mu)$ which is a regular graph on $v$ vertices with degree $k$ such that two adjacent vertices have $\lambda$ common neighbors and two non-adjacent vertices have $\mu$ common neighbors. Classical examples of strongly regular graphs are the pentagon graph with parameters $(5,2,0,1)$ and the Petersen graph with parameters $(10,3,0,1)$. Strongly regular graphs are closely related to a second important combinatorial structure, a block design. Goethals and Seidel \cite{GS70} pointed out that strongly regular graphs can be derived from block designs. For an extensive overview on strongly regular graphs we refer to \cite{bvm}.
\par Introduced by Tits \cite{Tit57}, combinatorics on sets can be considered as the limit case $q$ tending to $1$ of combinatorics of vector spaces over the finite field with $q$ elements. Here, intersection of subsets corresponds to intersection of subspaces, but the union of subsets corresponds to the span of subspaces, as to preserve the similarities between the lattice of subsets of a set with $v$ elements and the lattice of subspaces of a $v$-dimensional vector space over $\F_q$. Naturally, cardinalities of subsets correspond to dimensions of subspaces.
\par A combinatorial object in the subspace lattice of $\F^{v}_{q}$ that behaves similarly (i.e. with respect to intersection and span/union) as an object in the subset lattice on a set of $v$ elements is called a $q$-analog of the latter. 
The interest in the concept of $q$-analogs of combinatorial objects has been renewed in the past decade due to the connection between $q$-analogs of block designs (also called designs over finite fields, see \cite{qsteiner, buratti,Tho87}), in particular Steiner systems, and error correction in random linear network coding (see \cite{mario-subspace, KK08}).

The goal of this article is to apply the concept of $q$-analogs to strongly regular graphs. We define $q$-analogs of strongly regular graphs (see Definition \ref{def:qgraph}) and give some examples. We derive some properties and show connections to classical strongly regular graphs and $q$-analogs of block designs, thereby demonstrating the similarities with classical strongly regular graphs. In Section \ref{sec:classification}, we also present a complete classification of strongly regular $q$-ary graphs.
\par The definitions of $q$-analogs of designs or the $q$-analogs of group divisible designs (recently introduced in \cite{qGDD}), as well as the definition of the $q$-analog of a strongly regular graph that we will introduce, have a condition on the intersection of the involved subspaces. But while the former specify the number of blocks through subspaces (of a given dimension), the latter will specify the size of the intersection. A connection between both is given in Theorem \ref{thm:design}.

\section{Preliminaries}

In this section, we recall some basic definitions and results for strongly regular graphs. For $0\le s\le v$, we denote by $\binom{V}{s}$ the set of all $s$-element subsets of a set $V$ with $v$ elements. Its cardinality is the binomial coefficient $\binom{v}{s}$.


\begin{defn}\label{def:graph}
	A simple (undirected) graph $\Gamma$ is a pair $(V,E)$ where $V$ is a set and $E\subseteq\binom{V}{2}$. The elements of $V$ and $E$ are called \emph{vertices} and \emph{edges}, respectively. If there is an edge $\{X,Y\}$, then we say that $X$ and $Y$ are \emph{adjacent}.
\end{defn}

\begin{rem}\label{rem:subsetsofsize1}
	Note that we can  identify the set of vertices of a graph with the set $\binom{V}{1}$ and express everything in terms of 1- and 2-subsets. Via this point of view we will define the $q$-analog, substituting subsets by subspaces and cardinalities by dimensions.
\end{rem}

\begin{defn}\label{def:nbh}
	For a graph $\Gamma$ a vertex $y$ is called a neighbor of a vertex $x$ if $x$ and $y$ are adjacent. The \emph{(closed) neighborhood} $N_{\Gamma}(x)$ of a vertex $x$ is the set of all its neighbors and $x$ itself.
\end{defn}

The open neighborhood $N_{\Gamma}(x)$ of a vertex $x$ in a graph $\Gamma$ is the set of its neighbors (so without $x$). Note that most authors use neighborhood (without specification) for the open neighborhood, but to stress the similarity with the $q$-analogs we will only consider closed neighborhoods here, and thus an unspecified neighborhood will always be closed in this article.

\begin{rem}\label{rem:neigborhood}
	Note that for an isolated vertex (a vertex without neighbors) of a graph $\Gamma=(V,E)$ we have $N_{\Gamma}(x)=x$. For all non-isolated vertices we have $N_{\Gamma}(x)=\cup_{x\in e\in E}\;e$.
\end{rem}

\begin{rem}\label{rem:graphasmap}
	Given the set $V$ of vertices of a graph, we can identify a graph $\Gamma$ with the map $\phi:V\to\mathcal{P}(V):x\mapsto N_{\Gamma}(x)$. This map has as properties that $x\in\phi(x)$ for any $x\in V$ and that $x\in\phi(y)\iff y\in\phi(x)$.
\end{rem}

\begin{defn}\label{def:regular}
	A graph is called \emph{$k$-regular} if each vertex has exactly $k$ neighbors, equivalently if each neighborhood has size $k+1$. The set of all $k$-regular graphs on $v$ vertices will be denoted by $\RG(v,k)$.
\end{defn}

\begin{rem}
	In the light of Remark \ref{rem:graphasmap} and the comment after Definition \ref{def:graph}, we can think of a $k$-regular graph as a map $\binom{V}{1}\to\binom{V}{k+1}$ for a set $V$. 
\end{rem}

\begin{defn}\label{def:srg}
	A $k$-regular graph on $v$ vertices such that any two adjacent vertices have $\lambda$ common neighbors and such that any two non-adjacent vertices have $\mu$ common neighbors is called strongly regular with parameters $(v,k,\lambda,\mu)$. The set of all strongly regular graphs with parameters $(v,k,\lambda,\mu)$ is denoted by $\SRG(v,k,\lambda,\mu)$.
\end{defn}

In terms of neighborhoods, the previous definition can be given as follows: a graph $\Gamma\in\RG(v,k)$ is in $\SRG(v,k,\lambda,\mu)$ if and only if for all vertices $x,y$ with $x\ne y$ we have that
\[
	|(N_{\Gamma}(x)\cap N_{\Gamma}(y))\setminus\{x,y\}|=\begin{cases}
	\lambda&\text{if $x$ and $y$ are adjacent,}\\
	\mu&\text{otherwise.}
	\end{cases}
\]

The standard result connecting the four parameters of a strongly regular graph is the following.

\begin{thm}[{\cite[Equation (2.5)]{Bos63}}]\label{thm:standardequation}
	If there is a strongly regular graph with parameters $(v,k,\lambda,\mu)$, then
	\[
	k(k-1-\lambda)=(v-k-1)\mu\:.
	\]
\end{thm}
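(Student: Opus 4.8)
The plan is to establish the identity by a double-counting argument based on a single fixed vertex. Fix any vertex $x$ of a strongly regular graph $\Gamma\in\SRG(v,k,\lambda,\mu)$. Since $\Gamma$ is $k$-regular, the set $A$ of neighbors of $x$ has size $k$; let $B$ denote the set of all remaining vertices, i.e.\ those distinct from $x$ and non-adjacent to $x$, so that $|B|=v-k-1$ and $\{x\}\cup A\cup B$ partitions the vertex set. I would then count in two ways the number $N$ of edges joining a vertex of $A$ to a vertex of $B$.

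For the first count I would fix $y\in A$. Its degree is $k$; exactly one of its edges joins it to $x$, and, applying the strongly regular condition to the adjacent pair $x,y$, exactly $\lambda$ of its edges join it to other vertices of $A$. Each of the remaining $k-1-\lambda$ edges at $y$ must then join $y$ to a vertex of $B$, since $\{x\}$, $A$ and $B$ exhaust the vertex set. Summing over the $k$ choices of $y$ gives $N=k(k-1-\lambda)$. For the second count I would fix $z\in B$; as $x$ and $z$ are distinct and non-adjacent, the strongly regular condition gives them exactly $\mu$ common neighbors, and every such common neighbor lies in $A$ by the definition of $A$, so $z$ has exactly $\mu$ neighbors in $A$. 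Summing over the $v-k-1$ choices of $z$ gives $N=(v-k-1)\mu$. Equating the two expressions for $N$ yields
\[
	k(k-1-\lambda)=(v-k-1)\mu,
\]
the required identity.

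I do not expect a real obstacle: the argument is an elementary edge count using only $k$-regularity and the defining properties of $\SRG(v,k,\lambda,\mu)$ for adjacent and for non-adjacent pairs of vertices. The only points deserving mild care are the bookkeeping in the first count — separating the single edge $\{x,y\}$, the $\lambda$ edges of $y$ inside $A$, and the rest — and the degenerate case $v=k+1$, in which $B=\varnothing$ and both sides vanish, so that the statement holds trivially.
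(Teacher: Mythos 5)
Your proof is correct, and it is essentially the same double-counting argument the paper uses for the $q$-analog in Theorem \ref{thm:main1} (the paper itself only cites Bose for the classical statement): counting, for a fixed vertex $x$, the edges between its neighbors and its non-neighbors. The bookkeeping, including the degenerate case $B=\varnothing$, is handled properly.
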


We conclude this preliminary section by introducing block designs and their connection to strongly regular graphs.

\begin{defn}
A (block) design $(\mathcal{P},\mathcal{L})$ with parameters $t\text{-}(v,k,\lambda)$ consists of a set of \emph{points} $\mathcal{P}$ and a set of \emph{blocks} $\B\subseteq \binom{\mathcal{P}}{k}$ such that 
\[
\forall\, T\in\binom{\mathcal{P}}{t}:|\{K\in \B\mid T\subseteq K\}|=\lambda.
\]
\end{defn}

The following result connects a specific class of strongly regular graphs with symmetric 2-designs (\cite[p. 258]{haemers}) via neighborhoods. It is a so-called \emph{neighborhood design} (another can be made considering the open neighborhood).

\begin{thm}\label{thm:graphtodesign}
	If $\Gamma=(V,E)\in\SRG(v,k,\mu-2,\mu)$, then the set $\B_{\Gamma}:=\left\{N_{\Gamma}(x)\mid x\in V\right\}$ of all neighborhoods determines a symmetric $2\text{-}(v,k+1,\mu)$ design with point set $V$.
\end{thm}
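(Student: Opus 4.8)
The plan is to verify, one by one, the four defining features of a symmetric $2$-$(v,k+1,\mu)$ design on the point set $V$ with block set $\B_{\Gamma}$: that there are $v$ points, that every block has $k+1$ points, that every $2$-subset of $V$ is contained in exactly $\mu$ blocks, and finally that there are exactly $v$ blocks. The first item is immediate, and the second is immediate from $k$-regularity, since $|N_{\Gamma}(x)|=k+1$ for every vertex $x$ (Definition \ref{def:regular}). Throughout I would work with the reformulation from Remark \ref{rem:graphasmap}: for vertices $x,z$ we have $z\in N_{\Gamma}(x)\iff x\in N_{\Gamma}(z)$, and $x\in N_{\Gamma}(x)$ always.

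For the $2$-subset condition, fix distinct $p,q\in V$. A block $N_{\Gamma}(x)$ contains $\{p,q\}$ exactly when $p,q\in N_{\Gamma}(x)$, i.e. exactly when $x\in N_{\Gamma}(p)\cap N_{\Gamma}(q)$; so it suffices to compute $|N_{\Gamma}(p)\cap N_{\Gamma}(q)|$ and to know that distinct such $x$ give distinct blocks. If $p$ and $q$ are non-adjacent, then $p\notin N_{\Gamma}(q)$ and $q\notin N_{\Gamma}(p)$, so $p,q\notin N_{\Gamma}(p)\cap N_{\Gamma}(q)$ and hence $|N_{\Gamma}(p)\cap N_{\Gamma}(q)|=|(N_{\Gamma}(p)\cap N_{\Gamma}(q))\setminus\{p,q\}|=\mu$ by the $\SRG$ condition. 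If $p$ and $q$ are adjacent, then $p,q\in N_{\Gamma}(p)\cap N_{\Gamma}(q)$, so $|N_{\Gamma}(p)\cap N_{\Gamma}(q)|=|(N_{\Gamma}(p)\cap N_{\Gamma}(q))\setminus\{p,q\}|+2=\lambda+2=\mu$, using $\lambda=\mu-2$. Either way there are exactly $\mu$ vertices $x$ with $\{p,q\}\subseteq N_{\Gamma}(x)$.

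It remains to show that $x\mapsto N_{\Gamma}(x)$ is injective; this does everything that is left at once, since it upgrades ``$\mu$ vertices $x$'' to ``$\mu$ distinct blocks'' in the previous step (finishing the verification that $(V,\B_{\Gamma})$ is a $2$-$(v,k+1,\mu)$ design), it gives $|\B_{\Gamma}|=v$, and a $2$-design with as many blocks as points is symmetric. Suppose $N_{\Gamma}(x)=N_{\Gamma}(y)$ with $x\ne y$. Then $x\in N_{\Gamma}(x)=N_{\Gamma}(y)$ forces $x$ and $y$ adjacent, whence $k-1=|N_{\Gamma}(x)|-2=|(N_{\Gamma}(x)\cap N_{\Gamma}(y))\setminus\{x,y\}|=\lambda=\mu-2$, so $\mu=k+1$. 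This contradicts $\mu\le k$, which holds because any vertex non-adjacent to $x$ has all of its common neighbours with $x$ among the $k$ vertices of $N_{\Gamma}(x)\setminus\{x\}$ — here one uses that $\Gamma$ is not the complete graph, which I would flag as the one tacit non-degeneracy hypothesis (for $K_v$ all closed neighbourhoods coincide and $\B_{\Gamma}$ is a single block).

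I do not expect a serious obstacle: the argument is essentially careful bookkeeping with closed neighbourhoods. The only step that needs attention is the injectivity of $x\mapsto N_{\Gamma}(x)$ together with the exclusion of the complete graph. As a consistency check, one can also recover $|\B_{\Gamma}|=v$ by double counting incident (2-subset, block) pairs, which yields $|\B_{\Gamma}|\binom{k+1}{2}=\mu\binom{v}{2}$, and then substituting the relation $k(k+1)=\mu(v-1)$ obtained from Theorem \ref{thm:standardequation} with $\lambda=\mu-2$.
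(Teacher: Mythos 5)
Your proof is correct. Note that the paper does not actually prove Theorem \ref{thm:graphtodesign} — it is quoted from the literature (\cite[p.~258]{haemers}) — but its $q$-analog, Theorem \ref{thm:design}, is proved, and your central step is exactly the same double count: a block $N_{\Gamma}(x)$ contains $\{p,q\}$ iff $x\in N_{\Gamma}(p)\cap N_{\Gamma}(q)$, and the closed-neighbourhood bookkeeping makes this intersection have size $\mu$ in both the adjacent and non-adjacent cases precisely because $\lambda=\mu-2$. Where you go beyond the paper's analogous argument is in verifying that $x\mapsto N_{\Gamma}(x)$ is injective, which is what turns ``$\mu$ vertices $x$'' into ``$\mu$ distinct blocks,'' yields $|\B_{\Gamma}|=v$, and hence justifies the word \emph{symmetric}; the proof of Theorem \ref{thm:design} is silent on this point (and its statement does not claim symmetry). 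Your argument there is sound: $N_{\Gamma}(x)=N_{\Gamma}(y)$ with $x\neq y$ forces adjacency and $\lambda=k-1$, hence $\mu=k+1$, contradicting $\mu\le k$ for a non-complete graph. The exclusion of the complete graph that you flag is a genuine (and standard) tacit non-degeneracy hypothesis — the paper runs into the same degeneracy in Example \ref{ex:completegraph}, where $\mu$ is undefined and all neighbourhoods coincide — so your caveat is apt rather than a gap.
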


\section{Strongly regular \texorpdfstring{$q$}{q}-ary graphs}
 
In this section, we will introduce and discuss (strongly regular) $q$-ary graphs. When defining the $q$-analog of a graph, we consider a vector space $V$ over a finite field $\F_{q}$, replacing the set in the graph definition. For a (classical) graph, the edges correspond to 2-subsets, so it is natural to define the edges of a $q$-ary graph as 2-dimensional subspaces of $V$. We mentioned in Remark \ref{rem:subsetsofsize1} that the vertices of a graph can be considered both as the elements of a set and as its 1-subsets. Looking at the former interpretation one could consider the vectors of $V$ as the vertices of the $q$-analog of a graph. However, in that case there is a unique vertex, namely the zero vector, which is automatically contained in every edge, a degenerate situation. So, it is natural to define a $q$-ary graph as follows. We use the notation $\gstxt{V}{s}{}$ for the set of $s$-dimensional subspaces of the vector space $V$.
\begin{defn}\label{def:qgraph}
	A $q$-ary graph $\E$ is a pair $(V,E)$ where $V=\gstxt{W}{1}{}$ and $E\subseteq\gstxt{W}{2}{}$, with $W$ a vector space over $\F_{q}$. The elements of $V$ and $E$ are called \emph{vertices} and \emph{edges}, respectively. If $\langle X,Y\rangle$ is an edge, then we say that the vertices $X$ and $Y$ are \emph{adjacent}.
\end{defn}

\begin{rem}\label{rem:projective}
	A $q$-ary graph is thus a hypergraph whose vertices are the vector lines of a vector space $V$ over $\F_{q}$, and whose edges correspond to vector planes of $V$. One can also consider this in the context of a projective geometry. Then the vertices of a $q$-ary graph are the points of the projective space $\PG(W)$ and the edges are lines of this projective space; the edge set is thus a subset of the line set.
\end{rem}

We now define the neighborhood similarly to how we define the neighborhood for classical graphs (Definition \ref{def:nbh}).

\begin{defn}
	For a vertex $X$ of a $q$-ary graph $\E$ we define its neighborhood $N_{\E}(X)$ as the set of all vertices adjacent to it, and itself.
\end{defn}

\begin{rem}
	We note the similarity with Remark \ref{rem:neigborhood}. For an isolated vertex $X$ (a vertex without neighbors) of a graph $\E=(V,E)$ we have $N_{\E}(X)=X$. For a non-isolated vertex $X$ we have $N_{\E}(X)=\cup_{X\in e\in E}\;e$.
\end{rem}

We now introduce regular $q$-ary graphs. Note the similarity with Definition \ref{def:regular}.

\begin{defn}
	A $q$-ary graph $\E$ is $k$-regular if for any vertex $X$ the neighborhood $N_{\E}(X)$ is a $(k+1)$-dimensional subspace. The set of all $k$-regular $q$-ary graphs with underlying vector space $\F_q^v$ will be denoted by $\RG(v,k;q)$.
\end{defn}

If all the neighborhoods are $(k+1)$-dimensional, we call a $q$-ary graph $k$-regular instead of $(k+1)$-regular, to preserve to analogy with $k$-regular graphs, where the neighborhood of each vertex has size $k+1$. Alternatively, we could say that for a $k$-regular graph $\Gamma$ we have $|N_{\Gamma}(x)\setminus\{x\}|=k$ for any vertex $x$, while for a $k$-regular $q$-ary graph $\E$ we have $\dim(N_{\E}(X)/X)=k$ for any vertex $X$.

\begin{rem}
	A $k$-regular $q$-ary graph $\E$ on a vector space $V$ can be identified with the map $\gstxt{V}{1}{}\to\gstxt{V}{k+1}{}:X\mapsto N_{\E}(X)$. This is a $q$-analog for the map described in Remark \ref{rem:graphasmap}.
\end{rem}

We now introduce strongly regular $q$-ary graphs, as a $q$-analog to the classical strongly regular graphs (see Definition \ref{def:srg}).

\begin{defn}
A regular $q$-ary graph $\E\in\RG(v,k;q)$ is a strongly regular $q$-ary graph with parameters $(v,k,\lambda,\mu;q)$ if and only if for all vertices $X,Y$, with $X\ne Y$, we have that
	\[
	|(N_{\E}(X)\cap N_{\E}(Y))\setminus\{X,Y\}|=\begin{cases}
	\lambda&\text{if $X$ and $Y$ are adjacent,}\\
	\mu&\text{otherwise.}
	\end{cases}
	\]
The set of all strongly regular $q$-ary graphs with these parameters $(v,k,\lambda,\mu)$ is denoted by $\SRG(v,k,\lambda,\mu;q)$.
\end{defn}

\begin{rem}
	A  $k$-regular $q$-ary graph $\E$ in $\F_q^v$ can be considered as a set of lines in $\PG(v-1,q)$, such that for any point $P$ the union of the lines from $\E$ that contain $P$ is a $k$-subspace of $\PG(v-1,q)$, see also Remark \ref{rem:projective}. A $k$-regular $q$-ary graph $\E$ in $\F_q^v$ is strongly regular with parameters $(v,k,\lambda,\mu)$ if for every two distinct points $Y$ and $Z$ the number of points $X$ such that both lines $XY$ and $XZ$ correspond to edges of $\E$ is equal to $\lambda$ if the line $YZ$ corresponds to an edge of $\E$, and $\mu$ otherwise.  
\end{rem}

We now prove an analog for Theorem \ref{thm:standardequation}, stressing the similarity between strongly regular graphs and strongly regular $q$-ary graphs. We use the notation 
\[
[a]_{q}=\sum_{i=0}^{a-1}q^i=\frac{q^{a}-1}{q-1}=\left|\gs{\F^{a}_{q}}{1}{}\right|.
\]
The number of vertices of a graph in $\RG(v,k;q)$ thus equals $[v]_{q}$.

\begin{thm}\label{thm:main1}
If there is a strongly regular $q$-ary graph with parameters $(v,k,\lambda,\mu;q)$, then
\[
\left([k+1]_q-1\right)\left([k+1]_q-2-\lambda\right)=\left([v]_q-[k+1]_q\right)\mu.
\]
\end{thm}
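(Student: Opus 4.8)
The plan is to mimic the classical double-counting proof of Theorem~\ref{thm:standardequation}. Fix a vertex $X$ and count, in two ways, the pairs $(Y,Z)$ where $Y$ is a neighbor of $X$ with $Y \ne X$, and $Z$ is a common neighbor of $X$ and $Y$ with $Z \notin \{X, Y\}$. On one hand, the neighborhood $N_{\E}(X)$ is a $(k+1)$-dimensional subspace, so it contains $[k+1]_q$ vertices, hence $[k+1]_q - 1$ choices of $Y \ne X$ inside it. For each such $Y$, the pair $X,Y$ is adjacent, so the number of common neighbors $Z$ other than $X,Y$ is exactly $\lambda$; this gives $\left([k+1]_q - 1\right)\lambda$ pairs.

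For the other count, I would fix $Y$ as above (a neighbor of $X$, inside $N_{\E}(X)$, with $Y \ne X$) and count the vertices $Z \in N_{\E}(X)$ with $Z \notin \{X, Y\}$ that are \emph{not} adjacent to $Y$; summing the adjacent and non-adjacent cases must recover all of $N_{\E}(X) \setminus \{X, Y\}$, which has size $[k+1]_q - 2$. So for each $Y$ the number of $Z \in N_{\E}(X) \setminus \{X,Y\}$ non-adjacent to $Y$ is $[k+1]_q - 2 - \lambda$, giving $\left([k+1]_q-1\right)\left([k+1]_q-2-\lambda\right)$ triples $(X;Y,Z)$ with $Z \in N_{\E}(X)$, $Z \notin\{X,Y\}$, $Z$ not adjacent to $Y$, but $Z$ adjacent to $X$ and $Y$ adjacent to $X$. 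Now recount this same quantity by first choosing $Z$: there are $[v]_q - [k+1]_q$ vertices $Z$ outside $N_{\E}(X)$ — wait, here I must be careful: the $Z$'s above lie \emph{inside} $N_\E(X)$. The correct symmetric object to count is: triples $(Y,Z)$ of distinct vertices, both adjacent to $X$, with $Y$ and $Z$ non-adjacent to each other. Counting by $Y$ first gives $\left([k+1]_q - 1\right)\left([k+1]_q - 2 - \lambda\right)$ as above. Counting by $Z$ first: $Z$ ranges over the $[k+1]_q - 1$ neighbors of $X$ distinct from $X$, and for fixed non-adjacent $Y,Z$ the common neighbors of $Y,Z$ number $\mu$, one of which is $X$ — no, this still double-counts against $k$. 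The cleanest route is the standard one: count pairs $(Y,Z)$ with $Y$ adjacent to $X$, $Y \ne X$, and $Z$ adjacent to $Y$, $Z$ \emph{not} adjacent to $X$, $Z \ne X$; by the $Y$-first count this is $\left([k+1]_q-1\right)\left([k+1]_q - 2 - \lambda\right)$ (the $[k+1]_q - 2$ counting neighbors of $Y$ other than $X,Y$, minus $\lambda$ of them that are also neighbors of $X$), and by the $Z$-first count this is $\left([v]_q - [k+1]_q\right)\mu$ (for each $Z \notin N_\E(X)$, i.e.\ $Z$ non-adjacent to $X$ and $Z \ne X$, the common neighbors of $X$ and $Z$ number $\mu$, each such common neighbor being a valid $Y$). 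Equating the two yields the claimed identity.

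The main obstacle — and the place where the $q$-analog genuinely differs from the set case — is justifying that $N_{\E}(X)$ contains exactly $[k+1]_q$ vertices and that ``$Z$ adjacent to $X$'' is equivalent to ``$Z \in N_{\E}(X)$''. In the classical proof one silently uses $|N_\Gamma(x)| = k+1$; here regularity gives that $N_{\E}(X)$ is a $(k+1)$-dimensional subspace of $W$, and the number of its $1$-dimensional subspaces (vertices) is $[k+1]_q$ by the displayed formula for $[a]_q$, so the counts $[k+1]_q - 1$ and $[k+1]_q - 2$ are correct. I would also note explicitly that a vertex $Z \ne X$ lies in $N_\E(X)$ if and only if $\langle X, Z\rangle$ is an edge, i.e.\ $Z$ is adjacent to $X$ — this follows from the definition of the neighborhood together with the fact that $\langle X,Z\rangle$ is the unique line through the two points $X,Z$. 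With these identifications in place, the remaining argument is the verbatim transcription of Bose's double count, and the only arithmetic needed is the trivial rewriting of the factors; so the write-up should be short once the set-up is carefully stated.
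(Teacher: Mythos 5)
Your final double count---pairs $(Y,Z)$ with $Y$ adjacent to $X$, $Z$ adjacent to $Y$ but not to $X$, counted once by $Y$ and once by $Z$---is exactly the argument in the paper (with the roles of $X,Y,Z$ played there by $P,X,Y$), and your justification that $N_{\E}(X)$ contains $[k+1]_q$ vertices is the only genuinely $q$-analog ingredient, correctly handled. The earlier false starts should simply be deleted; the surviving argument is correct and essentially identical to the paper's proof.
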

\begin{proof}
Let $\E=(V,E)\in\SRG(v,k,\lambda,\mu;q)$ be a $q$-ary graph and consider a vertex $P\in V$. We count the tuples $(X,Y)\in V\times V$ with $P\neq X\neq Y\neq P$, $\langle X,P\rangle,\langle X,Y\rangle\in E$ and $\langle P,Y\rangle\notin E$, in two ways.
\par On the one hand, there are $\left([k+1]_q-1\right)$ choices for $X$. Given $X$, there are $\lambda$ vertices that are adjacent to both $P$ and $X$, and so $X$ has $\left([k+1]_q-1\right)-1-\lambda$ neighbors that are different from $P$ and not adjacent with $P$.
\par On the other hand, there are $\left([v]_q-[k+1]_q\right)$ vertices that are not in the neighborhood of $P$, so this is the number of choices for $Y$. Given $Y$, there are $\mu$ vertices that are a neighbor of both $P$ and $Y$ by the definition of a strongly regular graph. The statement follows.
\end{proof}

\begin{rem}
By considering the limit case $q\to 1$ for the equation \[
([k+1]_q-1)([k+1]_q-2-\lambda)=([v]_q-[k+1]_q)\mu
\]
we obtain
\[
(k+1-1)(k+1-2-\lambda)=(v-(k+1))\mu
\]
which simplifies to the equation in Theorem \ref{thm:standardequation}.
\end{rem}

We now present some examples of strongly regular $q$-ary graphs.

\begin{ex}\label{ex:completegraph}
	For a vector space $V=\F_{q}^{v}$ the set $E=\gstxt{V}{2}{}$ contains all $2$-dimensional subspaces of $V$. This edge set determines a trivial regular $q$-ary graph $\E$ which we call the \emph{complete $q$-ary graph}. For any vertex $X\in\gstxt{V}{1}{}$ we find $N_{\E}\left(X\right)=V$. Hence, we have $\E\in\RG(v,v-1;q)$.
	\par Each distinct pair of $1$-dimensional subspaces of $V$ span an element of $E$. Moreover, they are both adjacent to the $[v]_{q}-2$ other vertices (all other 1-dimensional subspaces). Therefore, $\E\in\SRG(v,v-1,[v]_q-2,\mu;q)$ for any $\mu$. Note that $\mu$ is not defined for this $q$-ary graphs since there are no two non-adjacent 1-dimensional subspaces in $\E$.
	\par This $q$-ary graph is the $q$-analog of the complete graph on $v$ vertices, which has parameters $(v,v-1,v-2,\mu)$, $\mu$ arbitrary. Any graph can be considered as a subset of the complete graph, and similarly any $q$-ary graph can be considered as a subset of the complete $q$-ary graph.
\end{ex}

\begin{ex}
	Consider a vector space  $V=\F_{q}^{v}$ and a set $\mathcal{S}\subseteq\gstxt{V}{t}{}$ such that each 1-dimensional subspace is contained in precisely one element of $\mathcal{S}$, $t\geq2$. In a projective setting, this is called a $(t-1)$-spread of $\PG(v-1,q)$, and it is known to exist if and only if $t\mid v$ (see \cite{segre}). Note that $|\mathcal{S}|=\frac{q^{v}-1}{q^{t}-1}$. Consider now the set $E$ of all vector planes which are contained in an element of $\mathcal{S}$; it defines a $q$-ary graph $\E$. Then, $\E\in\SRG(v,t-1,[t]_{q}-2,0;q)$. It is the disjoint union of $\frac{q^{v}-1}{q^{t}-1}$ complete $q$-ary graphs on a $t$-dimensional vector space. This is a $q$-analog of a graph $\Gamma$ on $v$ vertices which is the disjoint union of $\frac{v}{t}$ complete graphs on $t$ vertices; we know $\Gamma\in\SRG(v,t-1,t-2,0)$. If $t=v$, then we find the complete ($q$-ary) graph.
	\par In particular, for $t=2$ and $v$ even, any graph in $\RG(v,1)$ is a disjoint union of $\frac{v}{2}$ edges, and thus $\RG(v,1)=\SRG(v,1,0,0)$. Similarly, the edge set of any $q$-ary graph $\E\in\RG(v,1;q)$ is actually a set of 2-dimensional subspaces such that each 1-dimensional subspace is contained in precisely one element of it, and as such $\RG(v,1;q)=\SRG(v,1,q-1,0;q)$.
\end{ex}

\begin{ex}\label{ex:sypmlectic}
	Let $\varphi$ be a symplectic polarity on the vector space $\F_{q}^{v}$, $v$ even, and let $\E$ be the $q$-ary graph with as edge set the totally isotropic vector planes of $\varphi$. The totally isotropic vector planes through a vector line form a hyperplane, so $\E\in\SRG(v,v-2,\mu-2,\mu;q)$ with $\mu=\gstxt{v-2}{1}{q}$.
\end{ex}

We now want to prove an analog for Theorem \ref{thm:graphtodesign}. To that end we recall the definition of a $q$-analog of a design. These so-called $q$-designs have been the subject of a lot of investigations in recent years (see Section \ref{sec:intro}).

\begin{defn}
	A $t\text{-}(v,k,\lambda;q)$ design $(\mathcal{P},\mathcal{B})$ consists of a set of \emph
	points $\mathcal{P}=\gstxt{\F_q^v}{1}{}$ and a set of \emph{blocks} $\B\subseteq\gstxt{\F_q^v}{k}{}$
	such that
	\[
	\forall\, T\in\gs{\F_q^v}{t}{}:|\{K\in\B\mid T\subseteq K\}|=\lambda.
	\]
\end{defn}

\begin{thm}\label{thm:design}
If $\E\in\SRG(v,k,\mu-2,\mu;q)$, with vertex set $V$, then the set
\[
\B_{\E}:=\left\{N_{\E}(X)\mid X\in V\right\}
\]
of all neighborhoods determines a $2\text{-}(v,k+1,\mu;q)$ design with point set $V$.
\end{thm}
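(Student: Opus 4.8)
The plan is to verify the two conditions in the definition of a $2\text{-}(v,k+1,\mu;q)$ design for $\B_{\E}$: that every block is a $(k+1)$-dimensional subspace of the underlying vector space $W:=\F_q^v$, and that every $2$-dimensional subspace of $W$ lies in exactly $\mu$ blocks. The first condition is immediate: since $\E\in\SRG(v,k,\mu-2,\mu;q)$ it is in particular $k$-regular, so by definition each neighborhood $N_{\E}(X)$ is a $(k+1)$-dimensional subspace, that is, $\B_{\E}\subseteq\gstxt{W}{k+1}{}$. (We may assume $k\ge1$; if $k=0$ then $E=\emptyset$, forcing $\mu=0$, a degenerate case. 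In particular there are no isolated vertices.)

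The key observation for the second condition is that, by definition, $N_{\E}(X)$ consists of $X$ together with all vertices adjacent to $X$, and that $k$-regularity makes this set a $(k+1)$-dimensional subspace; hence the vertices it contains are \emph{exactly} $X$ and the neighbors of $X$. Consequently, for vertices $X$ and $Y$ one has $Y\subseteq N_{\E}(X)$ if and only if $Y=X$ or $Y$ and $X$ are adjacent, a relation symmetric in $X$ and $Y$. Now fix $T\in\gstxt{W}{2}{}$ and choose two distinct vertices $Y,Z$ with $\langle Y,Z\rangle=T$. Because every $N_{\E}(X)$ is a subspace, $T\subseteq N_{\E}(X)$ holds if and only if $Y\subseteq N_{\E}(X)$ and $Z\subseteq N_{\E}(X)$, that is, if and only if ($X=Y$ or $X$ adjacent to $Y$) and ($X=Z$ or $X$ adjacent to $Z$).

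I would then count the vertices $X$ with this property, distinguishing $X\in\{Y,Z\}$ from $X\notin\{Y,Z\}$. For $X\notin\{Y,Z\}$ the property says precisely that $X$ is adjacent to both $Y$ and $Z$, so the number of such $X$ equals $|(N_{\E}(Y)\cap N_{\E}(Z))\setminus\{Y,Z\}|$, which by the definition of a strongly regular $q$-ary graph is $\lambda=\mu-2$ if $Y$ and $Z$ are adjacent and $\mu$ otherwise. If $Y$ and $Z$ are adjacent, then $X=Y$ and $X=Z$ also satisfy the property (since $Y$ and $Z$ are adjacent), contributing $2$ more, for a total of $\lambda+2=\mu$; if $Y$ and $Z$ are not adjacent, neither contributes, for a total of $\mu$. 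Hence in all cases exactly $\mu$ vertices $X$ satisfy $T\subseteq N_{\E}(X)$.

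It remains to pass from this count of vertices to the number of distinct blocks in $\B_{\E}$ through $T$. For this I would check that $X\mapsto N_{\E}(X)$ is injective unless $\E$ is the complete $q$-ary graph, for which $\mu$ is not defined (Example \ref{ex:completegraph}): if $N_{\E}(X_1)=N_{\E}(X_2)$ with $X_1\ne X_2$, then $X_1\subseteq N_{\E}(X_2)$ gives that $X_1$ and $X_2$ are adjacent, so $|(N_{\E}(X_1)\cap N_{\E}(X_2))\setminus\{X_1,X_2\}|=[k+1]_q-2$ must equal $\lambda=\mu-2$, forcing $\mu=[k+1]_q$ and hence, by Theorem \ref{thm:main1}, $[v]_q=[k+1]_q$. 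Thus $|\{K\in\B_{\E}\mid T\subseteq K\}|=\mu$ for every $T\in\gstxt{W}{2}{}$, which is exactly what is needed. The points that require care are the characterization of membership in a neighborhood (so that ``$Y\subseteq N_{\E}(X)$'' behaves just like adjacency does in a classical graph), the bookkeeping of the boundary cases $X\in\{Y,Z\}$ that supplies precisely the term $2=\mu-\lambda$, and the injectivity remark; the remainder closely parallels the proof of Theorem \ref{thm:graphtodesign}.
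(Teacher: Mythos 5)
Your proof is correct and follows essentially the same double count as the paper's: fix a $2$-space, translate containment in a neighborhood into adjacency of the spanning vertices, and use strong regularity to get $\lambda+2=\mu$ in the adjacent case and $\mu$ in the non-adjacent case. Your extra verification that $X\mapsto N_{\E}(X)$ is injective (outside the degenerate complete-graph case) addresses a point the paper's proof passes over silently when it equates the number of blocks through $\sigma$ with the number of vertices $Z$ satisfying $\sigma\subseteq N_{\E}(Z)$.
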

\begin{proof}
It follows immediately from the definitions that the blocks are $(k+1)$-subspaces. Now, consider a 2-dimensional subspace $\sigma$ of the underlying vector space of $\E$. We need to check that there are $\mu$ elements of $\B_{\E}$ that contain $\sigma$. Consider two distinct 1-dimensional subspaces $X$ and $Y$ of $\sigma$ (vertices in the $q$-graph setting, points in the $q$-design setting). Any subspace $N_{\E}(Z)$ that contains $X$ and $Y$ contains $\sigma$, and vice versa. So,
\begin{align*}
	|\{B\in\B_{\E}\mid\sigma\subset B\}|&=|\{B\in\B_{\E}\mid X,Y\subset B\}|\\
	&=|\{Z\in V\mid X,Y\in N_{Z}(\E)\}|\\
	&=|\{Z\in V\mid Z\in N_{X}(\E)\wedge N_{Y}(\E)\}|\\
	&=|N_{\E}(X)\cap N_{\E}(Y)|\;.
\end{align*}
Since $\E\in\SRG(v,k,\mu-2,\mu;q)$, we know that $|N_{\E}(X)\cap N_{\E}(Y)|$ equals $(\mu-2)+2=\mu$ if $X$ and $Y$ are adjacent, and equals $\mu$ if $X$ and $Y$ are not adjacent.
\end{proof}

It is well known that a $2\text{-}(v,k,\lambda;q)$ $q$-ary design produces a $2\text{-}([v]_q,[k]_q,\lambda)$ design. Similarly, the following theorem holds.

\begin{thm}\label{thm:qsrg->srg}
If there exists a strongly regular $q$-ary graph with parameters $(v,k,\lambda,\mu;q)$, then there exists a strongly regular graph with parameters $([v]_q,[k+1]_q-1,\lambda,\mu)$.
\end{thm}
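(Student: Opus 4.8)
The plan is to pass from the $q$-ary graph to its \emph{point graph}. Given $\E=(V,E)\in\SRG(v,k,\lambda,\mu;q)$ with underlying vector space $W=\F_q^v$, so that $V=\gstxt{W}{1}{}$, I would define an ordinary graph $\Gamma=(V,E')$ on the \emph{same} vertex set by declaring two distinct vector lines $X,Y\in V$ adjacent in $\Gamma$ exactly when $\langle X,Y\rangle\in E$. Since the unordered pair $\{X,Y\}$ determines the plane $\langle X,Y\rangle$, the set $E'\subseteq\binom{V}{2}$ is a well-defined (simple) edge set, and $\Gamma$ has $|V|=[v]_q$ vertices, which accounts for the first parameter.

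The heart of the argument is a local identification of neighborhoods: for each vertex $X$, the closed neighborhood $N_\Gamma(X)$ should equal the set $\gstxt{N_\E(X)}{1}{}$ of all vector lines contained in the $(k+1)$-dimensional subspace $N_\E(X)$. One inclusion is immediate, since $Z\ne X$ being adjacent to $X$ in $\Gamma$ forces $Z\subseteq\langle X,Z\rangle\subseteq N_\E(X)$. For the reverse inclusion I would use that $N_\E(X)=\bigcup_{X\in e\in E}e$: any vector line $Z\subseteq N_\E(X)$ with $Z\ne X$ contains a nonzero vector lying on some edge $e\ni X$, and then $\langle X,Z\rangle=e\in E$ because $X$ and $Z$ are distinct lines of the plane $e$, so $Z$ is adjacent to $X$ in $\Gamma$. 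Granting this identification, $\Gamma$ is $([k+1]_q-1)$-regular since $|\gstxt{N_\E(X)}{1}{}|=[k+1]_q$; and for distinct $X,Y$ a vertex $Z$ is a common neighbor of $X$ and $Y$ in $\Gamma$ precisely when $Z\in(N_\E(X)\cap N_\E(Y))\setminus\{X,Y\}$, a set whose cardinality equals $\lambda$ if $\langle X,Y\rangle\in E$ and $\mu$ otherwise, by the definition of $\SRG(v,k,\lambda,\mu;q)$. Since $X$ and $Y$ are adjacent in $\Gamma$ exactly when $\langle X,Y\rangle\in E$, this gives $\Gamma\in\SRG([v]_q,[k+1]_q-1,\lambda,\mu)$.

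The step I expect to require the most care is the reverse inclusion in the local identification: one must check that \emph{every} vector line of the subspace $N_\E(X)$ is genuinely joined to $X$ by an edge of $\E$, i.e.\ that $\E$ induces the complete $q$-ary graph inside each neighborhood. This is forced by $N_\E(X)$ being, by definition, the union of the edges through $X$, and it mirrors exactly the passage from a $2\text{-}(v,k,\lambda;q)$ design to a $2\text{-}([v]_q,[k]_q,\lambda)$ design invoked just before the statement. The rest is routine unwinding of the identifications between the vertices, edges and neighborhoods of $\E$ and those of $\Gamma$.
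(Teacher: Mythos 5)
Your proposal is correct and follows exactly the paper's approach: take the same vertex set (the vector lines of $\F_q^v$) with the same adjacency relation, and read off the parameters. The paper simply asserts that the parameters follow "immediately from the definition," whereas you spell out the verification (note that the reverse inclusion you flag as delicate is in fact automatic, since $N_\E(X)$ is \emph{defined} as the set of vertices adjacent to $X$ together with $X$ itself).
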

\begin{proof}
Let $\E\in\SRG(v,k,\lambda,\mu;q)$. Define a simple graph $\Gamma$ whose vertices are the vertices of the $q$-graph, i.e.~the 1-spaces of $\F_q^v$, where two vertices of $\Gamma$ are adjacent if the corresponding vertices in $\E$ are adjacent. It follows immediately from the definition of a strongly regular $q$-graph that $\Gamma$ is strongly regular with parameters $([v]_q,[k+1]_q-1,\lambda,\mu)$.
\end{proof}

\section{Classification of the strongly regular \texorpdfstring{$q$}{q}-ary graphs}\label{sec:classification}

We conclude this paper by classifying the strongly regular $q$-ary graphs. We first prove that there are three families of strongly regular $q$-ary graphs.

\begin{lemma}\label{lem:srgnumbers}
	If $\E\in\SRG(v,k,\lambda,\mu;q)$, then
	\begin{itemize}
		\item $\E$ is the disjoint union of $\frac{q^{v}-1}{q^{k+1}-1}$ complete graphs on the elements of a $(k+1)$-spread, or
		\item $v-2=k$, and $\lambda+2=\mu=\frac{q^{k}-1}{q-1}$, or
		\item $(v,k,\lambda,\mu)=(5,2,q-1,1)$.
	\end{itemize}
\end{lemma}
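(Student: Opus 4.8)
The key structural tool is the map $X \mapsto N_{\E}(X)$ sending each vertex (1-space) to a $(k+1)$-dimensional subspace, which behaves like a "closed neighborhood." I would first exploit the self-referential property $X \in N_{\E}(X)$ together with the strong-regularity count on intersections. The plan is to fix a vertex $P$, let $H = N_{\E}(P)$ (a $(k+1)$-space), and study how the neighborhoods of the other vertices of $H$ relate to $H$. The crucial dichotomy is whether $H$ is "closed" in the sense that $N_{\E}(X) = H$ for every vertex $X \subseteq H$ — this will turn out to force the spread case — or whether some vertex in $H$ has a neighborhood escaping $H$, which should pin down the parameters tightly.

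\textbf{Step 1: The spread case.} Suppose $N_{\E}(X) = N_{\E}(P)$ for all vertices $X \subseteq N_{\E}(P)$, and that this happens for every vertex $P$. Then the relation "$X$ and $Y$ lie in a common neighborhood" is an equivalence relation whose classes are $(k+1)$-spaces partitioning the 1-spaces of $\F_q^v$; this is exactly a $(k+1)$-spread, and $\E$ restricted to each class is the complete $q$-ary graph of Example~\ref{ex:completegraph}. A spread of this dimension exists, so $(k+1) \mid v$ and the count $\frac{q^v-1}{q^{k+1}-1}$ of components is forced. One must check that $\E$ cannot "mix" — i.e., that if one neighborhood is closed then all are — which should follow from the edge relation being symmetric ($X \in N_{\E}(Y) \iff Y \in N_{\E}(X)$) and a short argument about edges crossing between a closed component and its complement.

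\textbf{Step 2: The non-spread case.} Now assume some vertex $P$ has a vertex $Q \subseteq N_{\E}(P)$ with $N_{\E}(Q) \neq N_{\E}(P)$. Since $P, Q$ are adjacent, $|N_{\E}(P) \cap N_{\E}(Q)| = \lambda + 2$, and both $N_{\E}(P)$ and $N_{\E}(Q)$ are $(k+1)$-spaces whose intersection is a proper subspace of each, hence at most $(k)$-dimensional, giving $\lambda + 2 \le [k]_q$. I expect the main obstacle to be here: squeezing this inequality against the standard equation of Theorem~\ref{thm:main1}, namely $([k+1]_q - 1)([k+1]_q - 2 - \lambda) = ([v]_q - [k+1]_q)\mu$, to force $v$ to be small. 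The cleanest route is probably to combine $\lambda + 2 \le [k]_q$ (so $[k+1]_q - 2 - \lambda \ge [k+1]_q - [k]_q = q^k$) with an upper bound on $\mu$ coming from $N_{\E}(Y) \cap N_{\E}(Z) \subseteq$ a single $(k+1)$-space (so $\mu \le [k+1]_q - 2$, or better). Feeding these into the equation should yield $[v]_q - [k+1]_q$ bounded in terms of $q$ and $k$ in a way that collapses to the two listed possibilities. For the middle family, the intersection bound must be tight, $\lambda + 2 = \mu = [k]_q$ and $v = k+2$, which matches the symplectic example (Example~\ref{ex:sypmlectic}). The sporadic case $(5,2,q-1,1)$ generalizes the pentagon $(5,2,0,1)$ and should fall out as the unique remaining small solution; one checks $v=5, k=2$ forces $[3]_q - 1 = q^2+q$ and the equation $(q^2+q)(q^2+q-1-\lambda) = (q^3+q^2)\mu$, whose only admissible solution with $\lambda < \mu$ and geometric realizability is $\lambda = q-1, \mu = 1$.

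\textbf{Where the difficulty lies.} The genuine work is the case analysis in Step 2: ruling out all intermediate values of $v$ between $k+2$ and the spread regime. I anticipate needing a refined local count — perhaps double-counting flags (vertex, edge, neighborhood) or examining a vertex $Y \notin N_{\E}(P)$ and the $\mu$ common neighbors of $P$ and $Y$ inside $N_{\E}(P)$, then tracking where their neighborhoods go — rather than the crude dimension bound alone. The symmetry property from Remark~\ref{rem:graphasmap} (carried over to the $q$-ary setting) will be used repeatedly to transfer information between $N_{\E}(X)$ and $N_{\E}(Y)$, and the $q \to 1$ heuristic (the classical analog of this lemma is the classification of SRGs with $\lambda = k-1$ or similar degenerate parameter sets) is a useful guide to which inequalities must be made tight.
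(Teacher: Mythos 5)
Your skeleton matches the paper's in outline: the dichotomy is between adjacent vertices having identical neighborhoods (which yields the spread case) and not, with the non-spread case attacked through the identity of Theorem \ref{thm:main1}. But Step 2 as written has a genuine gap: the inequalities you propose do not collapse the equation. Since $[k+1]_q-1=q[k]_q$, the bound $\lambda+2\le[k]_q$ gives a left-hand side at least $q[k]_q\cdot q^k$, while $\mu\le[k]_q$ (same subspace argument applied to non-adjacent pairs) bounds the right-hand side above by $q^{k+1}[v-k-1]_q[k]_q$; combining the two yields only $[v-k-1]_q\ge 1$, i.e.\ $v\ge k+2$, which is vacuous. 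The missing idea is to use \emph{exact} values rather than bounds: because common neighborhoods are intersections of subspaces, $\lambda+2=[l]_q$ and $\mu=[m]_q$ for integers $l,m$. Substituting these into the identity and clearing denominators gives
\[
q^{l+1}\left(q^{k}-1\right)\left(q^{k-l+1}-1\right)=q^{k+1}\left(q^{m}-1\right)\left(q^{v-k-1}-1\right),
\]
and since the factors in parentheses are coprime to $q$ whenever they are nonzero, comparing powers of $q$ on the two sides forces $l=k$ in the non-spread case; a further reduction modulo $q$ of the simplified equation $(q^k-1)(q-1)=(q^m-1)(q^{v-k-1}-1)$ then forces $v=k+2$ and $m=k$ when $m>1$, and $v=2k+1$ when $m=1$. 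This $q$-adic/mod-$q$ argument is the engine of the classification and is absent from your plan; the "refined local count" you defer to is not needed for this part.

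A second, smaller gap is the sporadic case. When $m=1$ you correctly land at $\mu=1$ and $v=2k+1$, but you still must bound $k$, and asserting that $(5,2,q-1,1)$ "falls out as the unique remaining small solution" gives no mechanism for excluding larger $k$ (the parameter equation alone is satisfied for every $k$ with $\lambda=[k]_q-2$, $\mu=1$, $v=2k+1$). The paper closes this with a short geometric argument: take $P_0,P_1,P_2$ with $P_1$ adjacent to $P_0$ and $P_2$ but $P_0,P_2$ non-adjacent, and set $\alpha_i=N_{\E}(P_i)$. Since $l=k$, both $\alpha_0\cap\alpha_1$ and $\alpha_1\cap\alpha_2$ are $k$-dimensional subspaces of the $(k+1)$-dimensional $\alpha_1$, so $\dim(\alpha_0\cap\alpha_2)\ge k-1$, whence $1=\mu\ge[k-1]_q$ and $k\le 2$; combined with $\lambda=[k]_q-2\ge 0$ this pins down $k=2$. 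You need some such argument (or an equivalent one) to complete the third bullet.
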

\begin{proof}
	The neighborhood of a vertex of $\E$ is a vector subspace of dimension $k+1$. Consequently, the common neighborhood of two (non-)adjacent vertices is also a subspace. Hence, there are integers $l,m$ such that $\lambda=\frac{q^{l}-1}{q-1}-2$ and $\mu=\frac{q^{m}-1}{q-1}$. From Theorem \ref{thm:main1}, we then derive that
	\begin{align}\label{eq1}
	&&\left(\frac{q^{k+1}-1}{q-1}-1\right)\left(\frac{q^{k+1}-1}{q-1}-\frac{q^{l}-1}{q-1}\right)&=\frac{q^{m}-1}{q-1}\left(\frac{q^{v}-1}{q-1}-\frac{q^{k+1}-1}{q-1}\right)\nonumber\\
	&\Leftrightarrow&q^{l+1}\left(q^{k}-1\right)\left(q^{k-l+1}-1\right)&=q^{k+1}\left(q^{m}-1\right)\left(q^{v-k-1}-1\right)\;.
	\end{align}
	Consequently, if both sides are different from zero, we have $k=l$. Note that $q^{k}-1>0$, so we have one of the three following cases.
	\begin{itemize}
		\item If $q^{k-l+1}-1=q^{m}-1=0$, then $\mu=0$ and $\lambda+2=\frac{q^{l}-1}{q-1}=\frac{q^{k+1}-1}{q-1}$, which means that any two adjacent vertices have exactly the same neighborhood, and two non-adjacent vertices have no common neighbors. Hence, $\F_{q}^{v}$ is partitioned in $\frac{q^{v}-1}{q^{k+1}-1}$ subspaces of dimension $k+1$, such that the edges of $\E$ are precisely the vector planes in these subspaces. I.e., $\E$ is the disjoint union of $\frac{q^{v}-1}{q^{k+1}-1}$ complete graphs on the elements of a $(k+1)$-spread of $\F_{q}^{v}$.
		\item If $q^{k-l+1}-1=q^{v-k-1}-1=0$, then $v=k+1=l$, so $\E$ is a complete graph. This is a particular instance of the first possibility in the statement of the theorem.
		\item If $k=l$, we have that $q^{k-l+1}-1=q-1>0$, and consequently $m\geq1$. Equation \eqref{eq1} reduces to
		\begin{align}\label{eq2}
		&&\left(q^{k}-1\right)\left(q-1\right)&=\left(q^{m}-1\right)\left(q^{v-k-1}-1\right)\nonumber\\
		&\Leftrightarrow&q^{k}-q^{k-1}-1&=q^{v-k-2+m}-q^{m-1}-q^{v-k-2}\;.
		\end{align}
		We again distinguish between two cases.
		\begin{itemize}
			\item If $m>1$, then reducing \eqref{eq2} modulo $q$, we see that $v-k-2=0$. From \eqref{eq2} it then follows that $q^{k-1}(q-1)=q^{m-1}(q-1)$, so $m=k$. We find the second possibility in the statement of the theorem.
			\item If $m=1$, then $q^{k-1}(q-1)=q^{v-k-2}(q-1)$, so $v=2k+1$. Clearly, then $\mu=1$. Let $P_{0},P_{1},P_{2}$ be vertices such that $P_{1}$ is adjacent with both $P_{0}$ and $P_{2}$, and $P_{0}$ and $P_{2}$ are non-adjacent; note that this is always possible. Let $\alpha_{i}=N_{\E}(P_{i})\in\gstxt{\F^{v}_{q}}{k+1}{}$, $i=0,1,2$. Since $k=l$, we have that $\dim(\alpha_{0}\cap\alpha_{1})=\dim(\alpha_{0}\cap\alpha_{2})=k$. However, then $\dim(\alpha_{0}\cap\alpha_{2})\geq k-1$, so $1=\mu\geq\frac{q^{k-1}-1}{q-1}$. So, $k\leq 2$. Since $\lambda\geq0$, we must have $k=2$ and we find the third possibility in the statement of the theorem.\qedhere
		\end{itemize}
	\end{itemize}
\end{proof}

\begin{lemma}\label{lem:symplectic}
	If $\E=(V,E)\in SRG(k+2,k,\mu-2,\mu;q)$ with $\mu=\frac{q^{k}-1}{q-1}$, then $\E$ is a $q$-ary graph described in Example \ref{ex:sypmlectic}.
\end{lemma}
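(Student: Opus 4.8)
The plan is to show that the edge set $E$ of $\E$ consists exactly of the totally isotropic planes of some symplectic polarity on $\F_q^{k+2}$. The starting point is the structural information already extracted: $\E \in \RG(k+2,k;q)$, so every neighborhood $N_{\E}(X)$ is a hyperplane of $W:=\F_q^{k+2}$; moreover, from the proof of Lemma \ref{lem:srgnumbers} (the case $k=l$, $m>1$) we know $\dim(N_{\E}(X)\cap N_{\E}(Y)) = k$ for all adjacent $X,Y$, i.e.\ distinct neighborhoods of adjacent vertices meet in a subspace of codimension $2$. First I would establish the key incidence fact: for every vertex $X$ we have $X \subseteq N_{\E}(X)$, and for adjacent $X,Y$ both $X$ and $Y$ lie in $N_{\E}(X)\cap N_{\E}(Y)$. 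Combined with Theorem \ref{thm:design}, the neighborhoods form a $2$-$(k+2,k+1,\mu;q)$ design, which is a (possibly degenerate) hyperplane design; I would use this to pin down that the map $X \mapsto N_{\E}(X)$ is a bijection from points of $\PG(W)$ to hyperplanes of $\PG(W)$, i.e.\ a duality.

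Next I would define the bilinear-form candidate. Identifying hyperplanes with points of the dual space $W^\ast$, the map $\phi: X \mapsto N_{\E}(X)$ gives a map from $\PG(W)$ to $\PG(W^\ast)$ with the property, read off from the symmetry of adjacency, that $Y \subseteq \phi(X) \iff X \subseteq \phi(Y)$, and also $X \subseteq \phi(X)$ always. The step I expect to be the main obstacle is upgrading this set-theoretic duality to a \emph{linear} one: I need to show $\phi$ is induced by a (necessarily non-degenerate, alternating) bilinear form $B$ on $W$, so that $\phi(X) = X^{\perp_B}$. For this I would invoke the fundamental theorem of projective geometry / a theorem on polarities: a correlation of $\PG(W)$ that is an involution and has every point absolute (lies on its own image) is a \emph{null polarity}, hence arises from a non-degenerate alternating form, provided the dimension is right. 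The condition $X\subseteq\phi(X)$ for all $X$ is exactly the statement that all points are absolute, which for a polarity forces it to be symplectic (in any characteristic, since a symmetric non-alternating form has non-absolute points). The care needed here is to first check $\phi$ really is a projectivity/correlation (preserves collinearity), which should follow because $\phi$ maps a line $\sigma = \langle X,Y\rangle$ to the intersection $\phi(X)\cap\phi(Y)$ of two hyperplanes — a codimension-$2$ space — consistently, using that adjacency of $X,Y$ together with regularity constrains which planes are edges.

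Once $\phi(X) = X^{\perp}$ for a non-degenerate alternating form $B$, I would identify the edge set. A plane $\sigma = \langle X, Y\rangle$ is an edge iff $X$ and $Y$ are adjacent iff $Y \subseteq \phi(X) = X^{\perp}$ iff $B(X,Y)=0$ iff $\sigma$ is totally isotropic (using that $B$ is alternating, so $B(X,X)=0$ automatically, and a $2$-space is totally isotropic as soon as its two generators are orthogonal). This shows $E$ is precisely the set of totally isotropic planes of the symplectic polarity $\varphi$ associated to $B$, which is the description in Example \ref{ex:sypmlectic}; the parameter check $\mu = \gstxt{k}{1}{q} = \frac{q^k-1}{q-1}$ is then automatic and already recorded there. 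I would close by remarking that the only thing used about $\mu$ beyond Lemma \ref{lem:srgnumbers} is that it equals $[k]_q$, i.e.\ the intersections have full expected dimension, which is what makes the duality non-degenerate.

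Worth noting as a possible simplification: instead of the fundamental theorem of projective geometry, one can try a direct argument — fix a basis, write each hyperplane $N_{\E}(e_i)$ as the kernel of a linear functional $f_i$, and use the pairwise incidence relations $e_j \in \ker f_i \iff e_i \in \ker f_j$ together with the absoluteness $e_i \in \ker f_i$ to force the matrix $(f_i(e_j))$ to be (a scalar multiple of) an alternating matrix; then non-degeneracy comes from $\dim N_{\E}(X) = k+1$ being constant and the design property. I expect the geometric route via polarities to be cleaner to write, so that is the one I would take, but the matrix computation is the natural fallback if the cited polarity classification is not convenient to quote in the form needed.
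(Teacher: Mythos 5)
Your proposal is correct in outline but takes a genuinely different route from the paper. The paper treats $(V,E)$ as a point--line geometry and verifies the Buekenhout--Shult one-or-all axiom: since each $N_{\E}(P)$ is a hyperplane of $\F_q^{k+2}$, every edge $e$ either lies inside $N_{\E}(P)$ or meets it in exactly one vector line, so $P$ is collinear with one or all points of $e$; as no point is collinear with all others, $(V,E)$ is a non-degenerate polar space whose associated polarity has every point absolute and is therefore symplectic. You instead build the correlation directly from the neighborhood map $\phi\colon X\mapsto N_{\E}(X)$ and invoke the fundamental theorem of projective geometry together with the classification of reflexive sesquilinear forms. Both are legitimate; the paper's route sidesteps the step you rightly flag as the main obstacle, namely checking that $\phi$ is a bona fide duality. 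For the record, that step does go through, and more cleanly than the justification you sketch (which appeals to adjacency and regularity). Injectivity of $\phi$ follows since adjacent vertices cannot share a neighborhood (that would force $\lambda+2=[k+1]_q$, whereas $\lambda+2=\mu=[k]_q$) and since $Y\subseteq N_{\E}(Y)$ but $Y\not\subseteq N_{\E}(X)$ for non-adjacent $X,Y$; bijectivity then follows by counting. The pencil property uses only the symmetry of $\phi$ and the fact that its images are subspaces: if $Z\subseteq\langle X,Y\rangle$ and $P\subseteq\phi(X)\cap\phi(Y)$, then $X,Y\subseteq\phi(P)$, hence $\langle X,Y\rangle\subseteq\phi(P)$, hence $Z\subseteq\phi(P)$ and so $P\subseteq\phi(Z)$; thus $\phi$ carries the points of each line onto the pencil of hyperplanes through a fixed codimension-$2$ space and extends to a duality. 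From there your argument is standard and correct: the reflexive form with every vector isotropic must be alternating (this rules out orthogonal, unitary and, in even characteristic, pseudo-polarities, none of which have all points absolute), and the edges are exactly the totally isotropic planes, as in Example \ref{ex:sypmlectic}.
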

\begin{proof}
	For each $X\in V$, its neighborhood $N_{\E}(X)$ is a hyperplane in $\F^{k+2}_{q}$. So, for $P\in V$ and $e\in E$ either $e$ is contained in $N_{\E}(P)$ or it shares a vector line with $N_{\E}(P)$. Hence, the point-line geometry $(V,E)$ fulfills the one-or-all axiom. Since no point is collinear to all other points, the geometry $(V,E)$ is a (non-degenerate) polar space by the Buekenhout-Shult axioms (see \cite{bs}). The corresponding polarity maps a vector line on the hyperplane that is its neighborhood. Since all vector lines are totally isotropic this polarity is necessarily symplectic. So, $\E$ indeed corresponds to Example \ref{ex:sypmlectic}.
\end{proof}

\begin{lemma}\label{lem:srg5}
	There are no strongly regular $q$-ary graphs with parameters $(5,2,q-1,1;q)$.
\end{lemma}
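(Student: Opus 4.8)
The plan is to reduce to the classical setting via Theorem~\ref{thm:qsrg->srg} and then invoke the standard feasibility (integrality) conditions for strongly regular graphs. Suppose, for contradiction, that $\E\in\SRG(5,2,q-1,1;q)$ exists. By Theorem~\ref{thm:qsrg->srg} there is then a classical strongly regular graph with parameters
\[
([5]_q,\,[3]_q-1,\,q-1,\,1)=\bigl(q^4+q^3+q^2+q+1,\ q^2+q,\ q-1,\ 1\bigr),
\]
so it suffices to show that no strongly regular graph with these parameters exists. Note this graph is connected (since $\mu=1>0$) and not complete (since $q^2+q<[5]_q-1$), so the standard theory applies.

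Next I would examine the non-principal eigenvalues of such a graph, namely $\tfrac12\bigl((\lambda-\mu)\pm\sqrt{\Delta}\bigr)$ with $\Delta=(\lambda-\mu)^2+4(k-\mu)$. For the parameters above, $\lambda-\mu=q-2$ and $k-\mu=q^2+q-1$, hence $\Delta=(q-2)^2+4(q^2+q-1)=5q^2$. Recall the well-known dichotomy (see e.g.~\cite{bvm}): a connected, non-complete strongly regular graph either has $\Delta$ a perfect square (so that the two non-principal eigenvalues are integers), or else it is a conference graph, in which case $(v,k,\lambda,\mu)=(4t+1,2t,t-1,t)$, and in particular $\mu-\lambda=1$. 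Now $5q^2$ is never a perfect square, since the exponent of $5$ in its factorisation is odd; and our graph cannot be a conference graph, because $\mu-\lambda=1-(q-1)=2-q$, which equals $1$ only for $q=1$, whereas $q\ge 2$ is a prime power. Both branches of the dichotomy fail, giving the desired contradiction.

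There is essentially no genuine obstacle here: the only steps requiring (routine) care are confirming the translated parameters in the first paragraph and verifying that \emph{both} alternatives are excluded, i.e.\ that $5q^2$ is not a square and that $\mu=\lambda+1$ has no admissible solution. As an alternative, more self-contained route one could argue geometrically: from $\mu=1$ one checks that the (closed) neighbourhood of each vertex of the classical SRG decomposes into cliques of size $q$, which organises the graph into a partial linear space on $[5]_q$ points having $[5]_q$ lines of size $q+1$ with $q+1$ lines through each point, and then derive a contradiction from the $\mu=1$ condition; but the eigenvalue argument above is shorter, so I would prefer it.
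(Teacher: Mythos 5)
Your proof is correct, and it takes a genuinely different route from the paper. The paper builds the bipartite point--edge incidence graph $G$ on $\mathcal{P}\cup\mathcal{L}$, shows it is $(q+1)$-regular with no cycles of length $4$, $6$ or $8$, deduces girth $10$ and diameter $5$, and concludes that $G$ would be the incidence graph of a generalized pentagon of order $(q,q)$, which Feit--Higman forbids. You instead pass to the classical $\SRG\bigl(q^4+q^3+q^2+q+1,\,q^2+q,\,q-1,\,1\bigr)$ via Theorem~\ref{thm:qsrg->srg} and kill it with the standard rationality/conference-graph dichotomy; your computations check out ($\Delta=(q-2)^2+4(q^2+q-1)=5q^2$ is never a square since the exponent of $5$ is odd, and $\mu-\lambda=2-q\neq 1$ for any prime power $q$), and the hypotheses of the dichotomy hold since $\mu=1>0$ forces connectedness and $k<v-1$. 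The two arguments are close cousins: the collinearity graph of a putative generalized pentagon of order $(q,q)$ is exactly the strongly regular graph you analyse, and the odd case of Feit--Higman is itself proved by an eigenvalue-multiplicity computation, so your proof is in effect a self-contained, direct instance of the relevant case of Feit--Higman. What your approach buys is brevity and independence from the theory of generalized polygons (only the textbook SRG feasibility conditions from \cite{bvm} are needed); what the paper's approach buys is the explicit geometric identification of the hypothetical object as a generalized pentagon, which situates the nonexistence in a classical context. Either is acceptable; your sketched ``alternative geometric route'' at the end is unnecessary given that the eigenvalue argument is already complete.
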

\begin{proof}
	Assume that $\E\in\SRG(5,2,q-1,1;q)$. Denote its set of vertices, i.e.~$\gstxt{\F^{5}_{q}}{1}{}$, by $\mathcal{P}$, and its set of edges by $\mathcal{L}$. Let $G$ be the simple, bipartite graph with vertex set $\mathcal{P}\cup\mathcal{L}$ where $X\in\mathcal{P}$ is adjacent with $Y\in\mathcal{L}$ if $X\subset Y$ (considered as vector subspaces). Since a vector plane contains $q+1$ vector lines, and since there are $q+1$ vector planes through a vector line in a 3-space, the graph $G$ is $(q+1)$-regular. It is immediate that $|\mathcal{P}|=\frac{q^{5}-1}{q-1}$, and by double counting the adjacent pairs $(X,Y)\in\mathcal{P}\times\mathcal{L}$, we get that $|\mathcal{L}|=\frac{q^{5}-1}{q-1}$.
	\par From $\lambda=q-1$ it follows that no two elements of $\mathcal{P}$ have the same neighborhood in $\E$. Since neighborhoods corresponds to 3-spaces in $\F^{5}_{q}$ this implies that there are no cycles of length six in $G$; note that there are no cycles of length 4 in $G$ since two vector lines determine a unique vector plane. If there would be a cycle of length 8 in $G$, then there are $X_{1},X_{2},X_{3},X_{4}\in\mathcal{P}$ such that $X_{i-1},X_{i+1}\subset N_{\E}(X_{i})$ and $X_{i+2}\not\subset N_{\E}(X_{i})$, where the indices are considered modulo 4. However, then $X_{2}$ and $X_{4}$ are both in $N_{\E}(X_{1})\cap N_{\E}(X_{3})$, contradicting $\mu=1$.
	\par Consider $X,X'\in\mathcal{P}$ with $X\neq X'$. If $X'$ is contained in $N_{\E}(X)$ then $d_{G}(X,X')=2$. If $X'\not\subset N_{\E}(X)$ then $d_{G}(X,X')=4$ since $\mu=1$ for $\E$. For any $X\in\mathcal{P}$ and $Y\in\mathcal{L}$ we get that
	\begin{itemize}
		\item $d(X,Y)=1$ iff $X\subset Y$,
		\item $d(X,Y)=3$ iff $X\not\subset Y$ and $Y$ has a non-trivial intersection with $N_{\E}(X)$, and
		\item $d(X,Y)=5$ iff $Y$ and $N_{\E}(X)$ intersect trivially.
	\end{itemize} 
	\par It follows that for a vertex $Y\in G$ that belongs to $\mathcal{L}$ there are $q+1$ vertices in $G$ at distance 1 and $q(q+1)$ vertices at distance 2. There are $q^{2}(q+1)$ vertices at distance 3 since there are no cycles of length 6, and $q^{3}(q+1)$ vertices at distance 4 since there are no cycles of length 8. We see that $1+q(q+1)+q^{3}(q+1)=|\mathcal{L}|$, so two vertices in $\mathcal{L}$ are at distance at most $4$ in $G$.
	\par We conclude that the diameter of $G$ is 5, and since there are no cycles of length 8, its girth must be 10. Consequently, $G$ is the incidence graph of a generalized pentagon with order $(q,q)$. However, it follows from the classical Feit-Higman result (see \cite{fh}) that such generalized polygons do not exist, contradicting the assumption.
\end{proof}

\begin{thm}\label{th:srgclass}
	If $\E\in\SRG(v,k,\lambda,\mu;q)$, then either $\E$ is the disjoint union of $\frac{q^{v}-1}{q^{k+1}-1}$ complete graphs on the elements of a $(k+1)$-spread, or $\E$ is a $q$-ary graph described in Example \ref{ex:sypmlectic}.
\end{thm}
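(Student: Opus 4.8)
The plan is to obtain the theorem by simply assembling the three preceding lemmas; once those are in hand the argument is almost immediate, so I will keep it brief. Let $\E\in\SRG(v,k,\lambda,\mu;q)$ be arbitrary. The first step is to invoke Lemma \ref{lem:srgnumbers}, which splits the situation into exactly three mutually exclusive possibilities for the data $(v,k,\lambda,\mu)$ and, in the first of them, already identifies $\E$ structurally: $\E$ is the disjoint union of $\frac{q^{v}-1}{q^{k+1}-1}$ complete $q$-ary graphs on the elements of a $(k+1)$-spread. This first possibility is precisely the first alternative in the statement of the theorem, so nothing further is needed there.

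The second step is to dispose of the remaining two possibilities. In the case $v=k+2$ with $\lambda+2=\mu=\frac{q^{k}-1}{q-1}$, that is $\E\in\SRG(k+2,k,\mu-2,\mu;q)$ with $\mu=\frac{q^{k}-1}{q-1}$, Lemma \ref{lem:symplectic} applies verbatim and shows that $\E$ is the $q$-ary graph of Example \ref{ex:sypmlectic}, namely the set of totally isotropic vector planes of a symplectic polarity on $\F_{q}^{k+2}$. In the case $(v,k,\lambda,\mu)=(5,2,q-1,1)$, Lemma \ref{lem:srg5} tells us that no strongly regular $q$-ary graph with these parameters exists, so this possibility does not occur. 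Since the three cases of Lemma \ref{lem:srgnumbers} are exhaustive, we conclude that $\E$ is either the spread-type graph or the symplectic graph of Example \ref{ex:sypmlectic}, which is the claim.

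The real content of the classification is not in this assembly but in the three lemmas, and that is where the obstacles lie. Lemma \ref{lem:srgnumbers} is the crux: one uses that every neighborhood, and hence every intersection of two neighborhoods, is a subspace, which forces $\lambda+2$ and $\mu$ to be Gaussian binomial values, and then extracts from the counting identity of Theorem \ref{thm:main1} a factored Diophantine equation whose exponent analysis leaves only finitely many surviving families; that case distinction is the delicate part. Lemma \ref{lem:symplectic} then requires recognizing the one-or-all (Buekenhout-Shult) axiom in the point-line geometry $(V,E)$ and invoking the classification of polar spaces to pin down the polarity as symplectic. Finally, Lemma \ref{lem:srg5} requires building the incidence graph of $(\mathcal{P},\mathcal{L})$, controlling its girth and diameter to see it is the incidence graph of a generalized pentagon of order $(q,q)$, and then quoting Feit-Higman for non-existence. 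Granting these three lemmas, the theorem follows as sketched above.
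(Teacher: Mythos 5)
Your proposal is correct and matches the paper's proof exactly: the theorem is obtained by combining Lemma \ref{lem:srgnumbers} (the trichotomy), Lemma \ref{lem:symplectic} (identifying the $v=k+2$ case with Example \ref{ex:sypmlectic}), and Lemma \ref{lem:srg5} (excluding the $(5,2,q-1,1)$ case). The paper's own proof is the same one-line assembly of these three lemmas.
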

\begin{proof}
	This follows immediately from Lemmas \ref{lem:srgnumbers}, \ref{lem:symplectic} and \ref{lem:srg5}.
\end{proof}

\section{Statements and Declarations}

\subsection{Declaration of competing interests}

The authors declare no conflict of interest.

\subsection{Data availability}

No data was used for the research described in the article.

\vspace*{0.5cm}

\noindent {\bf Acknowledgements} \\
D. Crnkovi\'c and V. Mikuli\'c Crnkovi\'c have been supported by Croatian Science Foundation under the project 6732 and M. De Boeck and A. \v Svob have been supported by Croatian Science Foundation under the project 5713.

\end{document}